\newtheorem{theorem}{Theorem}[section]
\newtheorem{lemma}[theorem]{Lemma}
\newtheorem{proposition}[theorem]{Proposition}
\begin{document}

\title{Sums of products of Bessel functions and order derivatives of Bessel functions}

\author{Yilin Chen}
\email{Yilin.Chen@tufts.edu}
\address{Institute of Cosmology, Department of Physics and Astronomy, \\
Tufts University, Medford, Massachusetts 02155, USA}

\begin{abstract}
    In this paper, sums represented in \eqref{sum} are studied. The expressions are derived in terms of Bessel functions of the first and second kinds and their integrals. Further, we point out the integrals can be written as a Meijer G function.
\end{abstract}
\maketitle

\section{Introduction}
In \cite{chen}, we find a closed-form for a summation of series involving Bessel functions and the order derivatives of Bessel functions. We also apply this expression in the study of entanglement entropy in two dimensional bosonic free field. However, there is a problem left unsolved: How we can find a concise way to evaluate
\begin{align}
    (2\partial_{\omega})^i(2\partial_{\bar{\omega}})^j \left[\left.\frac{\partial}{\partial n}\Tilde{G}_n^{\left(1+\frac{i+j+l}{2}\right)}(\mathbf{r},\mathbf{r'})\right|_{n=1}-\Tilde{G}_1^{\left(1+\frac{i+j+l}{2}\right)}(\mathbf{r},\mathbf{r'})\right]_{\mathbf{r}=\mathbf{r'}}
\end{align}
when $i\neq j$, just like the case in \cite{chen} for $i=j$? The major problem is that the derivative $2\partial_{\omega}$ or $2\partial_{\bar{\omega}}$ would break the ‘paired-up’ form of the summation $\sum_{n=0}^{\infty}nJ_n(x)\partial J_{n}(x)/\partial n$, i.e.
\begin{align}
    2\partial_{\omega}\left(e^{in\theta}J_{n}(\lambda r)\right)=\lambda e^{i(n-1)\theta}J_{n-1}(\lambda r),\ \ 2\partial_{\omega}\left(e^{-in\theta}J_{n}(\lambda r)\right)=-\lambda e^{-i(n+1)\theta}J_{n+1}(\lambda r),\nonumber\\
    2\partial_{\bar{\omega}}\left(e^{in\theta}J_{n}(\lambda r)\right)=-\lambda e^{i(n+1)\theta}J_{n+1}(\lambda r),\ \ 2\partial_{\bar{\omega}}\left(e^{-in\theta}J_{n}(\lambda r)\right)=\lambda e^{-i(n-1)\theta}J_{n-1}(\lambda r),\nonumber
\end{align}
so that
\begin{align}
    (2\partial_{\omega})^{\mu+\nu}(2\partial_{\bar{\omega}})^{\nu} \left[\left.\frac{\partial}{\partial n}\Tilde{G}_n^{\left(1+\nu+\frac{\mu+l}{2}\right)}(\mathbf{r},\mathbf{r'})\right|_{n=1}-\Tilde{G}_1^{\left((1+\nu+\frac{\mu+l}{2}\right)}(\mathbf{r},\mathbf{r'})\right]_{\mathbf{r}=\mathbf{r'}}\nonumber\\
    =\frac{1}{\pi}\int_0^{\infty}\frac{(-\lambda)^{2\nu+1}}{\left(\lambda^2+m^{'2}\right)^{1+\nu+\frac{\mu+l}{2}}}e^{-i\mu\theta}P_{\mu}(\lambda r)d\lambda,
\end{align}
where
\begin{align}\label{sum}
    P_{\mu}(x)\equiv\sum_{n=1}^{\infty}n\left[\left(\hat{J}_{n-\mu}(x)J_{n}(x)+J_{n-\mu}(x)\hat{J}_{n}(x)\right)+(-1)^{\mu}\left(\hat{J}_{n+\mu}(x)J_{n}(x)+J_{n+\mu}(x)\hat{J}_{n}(x)\right)\right],
\end{align}
where hats indicate the order derivatives. Now, we can point out that the crucial point to solve this problem is how we can get a concise form of $P_{\mu}(x)$.

In this paper, we study the summation $P_{\mu}(x)$ and obtain a closed-form expression of it. The main work is done in Section II but with an undetermined integral constant. Later, in the next section, we analyze the asymptotic behavior of $P_{\mu}(x)$ and determine the integral constant. In the end, we evaluate the integrals in the expression and represent $P_{\mu}(x)$ in terms of Bessel functions and Meijer G functions.
\section{Derivation of $P_{\mu}(x)$}
\begin{lemma}
For positive $x$ and $\mu,\nu\in\mathbb{Z}$, 
\begin{align}\label{lammam1}
    \sum_{n=0}^{\infty}\varepsilon_n J_{\nu+\mu+n}(x)J_{\nu+n}(x)&=\int_0^x\frac{2\nu+\mu}{t}J_{\nu+\mu}(t)J_{\nu}(t)dt\nonumber\\
    &=\frac{2}{\pi}\frac{\sin\frac{\pi}{2}\mu}{\mu}-\int_x^{\infty}\frac{2\nu+\mu}{t}J_{\nu+\mu}(t)J_{\nu}(t)dt,
\end{align}
where $\varepsilon_0=1$ and $\varepsilon_n=2$ otherwise.
\end{lemma}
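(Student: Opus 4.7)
Write $S(x)$ for the series on the left and $T(x)=\int_0^{x}\frac{2\nu+\mu}{t}J_{\nu+\mu}(t)J_{\nu}(t)\,dt$ for the first integral on the right. The plan is to differentiate $S$ termwise, recognise two telescoping families that collapse under the weight $\varepsilon_n$, and then fix the constant of integration from the value at the origin.

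Applying $2J_\alpha'(x)=J_{\alpha-1}(x)-J_{\alpha+1}(x)$ to each factor and regrouping, the derivative takes the form
\begin{equation}
2S'(x)=\sum_{n=0}^{\infty}\varepsilon_n\bigl[(f_n-f_{n+1})+(g_n-g_{n+1})\bigr],
\end{equation}
with $f_n(x)=J_{\nu+\mu+n-1}(x)J_{\nu+n}(x)$ and $g_n(x)=J_{\nu+\mu+n}(x)J_{\nu+n-1}(x)$. Termwise differentiation is legitimate because $J_\alpha(x)=O((x/2)^{\alpha}/\Gamma(\alpha+1))$ as $\alpha\to\infty$ forces rapid decay on compact subsets of $(0,\infty)$. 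The weights $\varepsilon_0=1$, $\varepsilon_n=2$ for $n\geq 1$ are tuned so that an Abel-type partial summation collapses each family to $f_0+f_1$ (respectively $g_0+g_1$), since $f_n,g_n\to 0$. Assembling the four surviving terms and invoking the recurrence $J_{\alpha-1}(x)+J_{\alpha+1}(x)=\frac{2\alpha}{x}J_{\alpha}(x)$ at both $\alpha=\nu$ and $\alpha=\nu+\mu$ produces $S'(x)=\frac{2\nu+\mu}{x}J_{\nu+\mu}(x)J_{\nu}(x)=T'(x)$.

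The boundary condition comes from $J_\alpha(x)=O(x^{|\alpha|})$ as $x\to 0^{+}$: every term in $S(x)$ vanishes at the origin, so $S(0)=0=T(0)$, which gives the first equality after integration. For the second equality, specialising the Weber--Schafheitlin formula to exponent $\lambda=1$ and simplifying with the reflection identity $\Gamma(1+\mu/2)\Gamma(1-\mu/2)=\frac{\pi\mu/2}{\sin(\pi\mu/2)}$ yields
\begin{equation}
(2\nu+\mu)\int_0^{\infty}\frac{J_{\nu+\mu}(t)J_{\nu}(t)}{t}\,dt=\frac{2}{\pi}\frac{\sin(\pi\mu/2)}{\mu},
\end{equation}
and splitting $\int_0^{\infty}=\int_0^{x}+\int_x^{\infty}$ converts the first form into the second.

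The one step requiring real care is the weighted telescoping: because $\varepsilon_0=1$ while $\varepsilon_n=2$ for $n\ge 1$, the partial sum $\sum\varepsilon_n(f_n-f_{n+1})$ does not collapse to $f_0$ alone but to $f_0+f_1$, and it is precisely this extra boundary term, combined with its $g$-family counterpart, that supplies the two copies of the three-term recurrence which combine into the single factor $(2\nu+\mu)/x$. Once that index-shift bookkeeping is carried out correctly, the remainder of the argument is a straightforward chain of standard Bessel identities.
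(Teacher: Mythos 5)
Your proposal is correct and takes essentially the same route as the paper: termwise differentiation of the series, a telescoping that leaves only the $n=0,1$ boundary products, the three-term recurrence to produce $S'(x)=\frac{2\nu+\mu}{x}J_{\nu+\mu}(x)J_{\nu}(x)$, integration from the origin, and the known value of $\int_0^\infty t^{-1}J_{\nu+\mu}(t)J_{\nu}(t)\,dt$ (Weber--Schafheitlin, i.e.\ GR 6.574) for the second line. The only difference is bookkeeping: the paper telescopes the unweighted partial sums up to $N$ and lets the $\varepsilon_n$ weights emerge through $J_{p-1}=\frac{p}{x}J_p+J_p'$, whereas you carry the weights through the telescoping directly and collapse to $f_0+f_1+g_0+g_1$ — the two computations are equivalent.
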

\begin{proof}
Firstly, using
\begin{align}
    J'_p(x)=\frac{1}{2}\left(J_{p-1}(x)-J_{p+1}(x)\right),
\end{align}
we have
\begin{align}
    \{J_{\nu+\mu+n}(x)J_{\nu+n}(x)\}'=&\frac{1}{2}\left(J_{\nu+\mu+n}(x)J_{\nu+n-1}(x)-J_{\nu+\mu+n+1}(x)J_{\nu+n}(x)\right)\nonumber\\
    &+\frac{1}{2}\left(J_{\nu+\mu+n-1}(x)J_{\nu+n}(x)-J_{\nu+\mu+n}(x)J_{\nu+n+1}(x)\right).
\end{align}
Summing both sides over $n=0$ to $n=N$ gives, via telescoping
\begin{align}\label{lemma1}
    \sum_{n=0}^N\{J_{\nu+\mu+n}(x)J_{\nu+n}(x)\}'=&\frac{1}{2}\left(J_{\nu+\mu}(x)J_{\nu-1}(x)+J_{\nu+\mu-1}(x)J_{\nu}(x)\right)\nonumber\\
    &-\frac{1}{2}\left(J_{\nu+\mu+N+1}(x)J_{\nu+N}(x)+J_{\nu+\mu+N}(x)J_{\nu+N+1}(x)\right).
\end{align}
Before setting $N\to\infty$, we use
\begin{align}
    J_{p-1}(x)=\frac{p}{x}J_{p}(x)+J'_{p}(x)
\end{align}
to obtain from \eqref{lemma1},
\begin{align}\label{lemma2}
    \sum_{n=0}^N\{J_{\nu+\mu+n}(x)J_{\nu+n}(x)\}'=&\frac{\nu+\mu/2}{x}J_{\nu}(x)J_{\nu+\mu}(x)+\frac{1}{2}\left\{J_{\nu+\mu}(x)J_{\nu}(x)\right\}'\nonumber\\
    &-\frac{\nu+N+1+\mu/2}{x}J_{\nu+\mu+N+1}(x)J_{\nu+N+1}(x)\nonumber\\
    &-\frac{1}{2}\left\{J_{\nu+\mu+N+1}(x)J_{\nu+N+1}(x)\right\}'.
\end{align}
When $N\to\infty$, for finite $x$, it is safe to apply the asymptotic approximation for small argument $0<x<<\sqrt{\alpha+1}$, which gives
\begin{align}
    J_{\alpha}(x)\sim\frac{1}{\Gamma(\alpha+1)}\left(\frac{x}{2}\right)^{\alpha}.
\end{align}
Hence, the terms which contain orders of $N$ in \eqref{lemma2} go to zero when $N$ approaches to infinity and \eqref{lemma2} becomes
\begin{align}\label{lamma3}
    \frac{1}{2}\sum_{n=0}^N\varepsilon_n\{J_{\nu+\mu+n}(x)J_{\nu+n}(x)\}'=&\frac{\nu+\mu/2}{x}J_{\nu}(x)J_{\nu+\mu}(x).
\end{align}
Then integrating \eqref{lamma3} over $0$ to $x$ yields the first equation in \eqref{lammam1}. Recalling (see \cite{GR1}, 6.574)
\begin{align}
    \int_0^{\infty}\frac{dt}{t} J_{p}(t)J_{q}(t)=\frac{2}{\pi}\frac{\sin\left(\frac{\pi}{2}(p-q)\right)}{p^2-q^2},
\end{align}
the integral over $0$ to $\infty$ is convergent. By subtracting this definite integral, we have the second line in \eqref{lammam1}.
\end{proof}

\begin{lemma}
For integer $\mu$,
\begin{align}\label{lammam2}
    \sum_{n=1}^{\infty}\left(J_{n-\mu}(x)J_{n}(x)+(-1)^{\mu}J_{n+\mu}(x)J_{n}(x)\right)+J_{-\mu}(x)J_0(x)=\delta_{\mu0}.
\end{align}
\end{lemma}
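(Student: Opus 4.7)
The plan is to reduce this to the classical Bessel orthogonality identity
\begin{equation*}
    \sum_{n=-\infty}^{\infty} J_{n-\mu}(x)\,J_{n}(x)=\delta_{\mu 0},
\end{equation*}
which is an immediate consequence of the generating function $e^{(x/2)(t-1/t)}=\sum_{n\in\mathbb{Z}}J_{n}(x)\,t^{n}$. Indeed, multiplying the two generating functions for arguments $t$ and $1/t$ gives $1$, and extracting the coefficient of $t^{\mu}$ yields the display above. I would either quote this as a known result or derive it in one line from the generating function.

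The main step is then to fold the two-sided sum in \eqref{lammam2} onto $n\ge 1$. I would split
\begin{equation*}
    \sum_{n=-\infty}^{\infty}J_{n-\mu}(x)J_{n}(x)=\sum_{n=1}^{\infty}J_{n-\mu}(x)J_{n}(x)+J_{-\mu}(x)J_{0}(x)+\sum_{n=-\infty}^{-1}J_{n-\mu}(x)J_{n}(x),
\end{equation*}
and substitute $n=-m$ in the last sum so that $m$ runs from $1$ to $\infty$. Applying the integer-order reflection $J_{-k}(x)=(-1)^{k}J_{k}(x)$ twice gives
\begin{equation*}
    J_{-m-\mu}(x)\,J_{-m}(x)=(-1)^{m+\mu}J_{m+\mu}(x)\cdot(-1)^{m}J_{m}(x)=(-1)^{\mu}J_{m+\mu}(x)J_{m}(x),
\end{equation*}
so the negative-index piece collapses into $(-1)^{\mu}\sum_{n=1}^{\infty}J_{n+\mu}(x)J_{n}(x)$. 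Adding this to the positive-index piece and isolating the $n=0$ term $J_{-\mu}(x)J_{0}(x)$ reproduces exactly the left-hand side of \eqref{lammam2}, and equating to $\delta_{\mu 0}$ finishes the proof.

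There is no real obstacle here; the identity is essentially a bookkeeping consequence of generating-function orthogonality and the parity relation for integer-order Bessel functions. The only points requiring care are the correct treatment of the $n=0$ term (which must not be double-counted when folding the sum) and the sign tracking in $(-1)^{m+\mu}(-1)^{m}=(-1)^{\mu}$, which is what produces the factor in front of the second sum in \eqref{lammam2}. No convergence issues arise because the two-sided series converges absolutely for every fixed real $x$, as $J_{n}(x)$ decays faster than any power of $1/n$ as $|n|\to\infty$.
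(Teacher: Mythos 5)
Your proof is correct and follows essentially the same route as the paper: both fold the sum over $n\ge 1$ onto the bilateral sum $\sum_{n=-\infty}^{\infty}J_{n-\mu}(x)J_n(x)$ using $J_{-k}(x)=(-1)^kJ_k(x)$, and then evaluate that bilateral sum to $\delta_{\mu 0}$. The only cosmetic difference is that the paper quotes Graf's addition theorem (giving $J_{-\mu}(0)=\delta_{\mu 0}$) for the bilateral identity, whereas you derive it directly from the generating function $e^{(x/2)(t-1/t)}=\sum_{n\in\mathbb{Z}}J_n(x)t^n$.
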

\begin{proof}
Using $J_{-n}(x)=(-1)^nJ_n(x)$ for integer $n$ and the Graf's additional theorem yields
\begin{align}
    \sum_{n=1}^{\infty}\left(J_{n-\mu}(x)J_{n}(x)+(-1)^{\mu}J_{n+\mu}(x)J_{n}(x)\right)+J_{-\mu}(x)J_0(x)=\sum_{n=-\infty}^{\infty}J_{n-\nu}(x)J_{n}(x)=J_{-\mu}(0).
\end{align}
\end{proof}

\begin{proposition}
For non-zero and integer $\mu$
\begin{align}\label{propm}
    P_{\mu}(x)=(-1)^{\mu}\left(-f_{\mu}(x)+\mu^2\int_x^{\infty}\frac{f_{\mu}(t)}{t}dt+(1-\mu^2)x\int_x^{\infty}\frac{f_{\mu}(t)}{t^2}dt+Cx\right),
\end{align}
where $C$ is an integral constant.
\end{proposition}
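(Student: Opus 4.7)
The strategy is to reduce $P_\mu(x)$ to the unweighted Bessel sums of Lemmas~1 and~2 via an order-parameter derivative trick combined with the Bessel recurrence, and then rearrange the resulting tail integrals into the three-integral form of~\eqref{propm}, leaving the homogeneous mode $Cx$ as the undetermined piece.

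\textbf{Step 1 (removing the $n$ weight).} Each pair of order-derivative products can be written as a $\nu$-derivative of a shifted Bessel product,
\begin{align*}
\hat J_{n-\mu}(x)J_n(x)+J_{n-\mu}(x)\hat J_n(x)=\partial_\nu\bigl[J_{n-\mu+\nu}(x)J_{n+\nu}(x)\bigr]_{\nu=0},
\end{align*}
and similarly for the $(-1)^\mu$ companion. The factor $n$ is absorbed using the Bessel recurrence $2nJ_n(x)=x(J_{n-1}(x)+J_{n+1}(x))$ together with its $\nu$-derivative $2n\hat J_n(x)=x(\hat J_{n-1}(x)+\hat J_{n+1}(x))-2J_n(x)$. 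This converts $P_\mu(x)$ into $\tfrac{x}{2}$ times a collection of unweighted sums of Bessel and order-derivative products with indices shifted by $\pm 1,\pm\mu$, plus a correction $\sum_{n\geq 1}[J_{n-\mu}(x)J_n(x)+(-1)^\mu J_{n+\mu}(x)J_n(x)]$ coming from the $-2J_n$ in the identity for $n\hat J_n$; Lemma~2 evaluates this correction as $-(-1)^\mu J_\mu(x)J_0(x)$.

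\textbf{Step 2 (Lemma~1 and consolidation).} Each shifted unweighted sum is precisely the $\partial_\nu|_{\nu=0}$ of a Lemma~1 sum, so~\eqref{lammam1} converts it into a tail integral on $[x,\infty)$ whose integrand is a Bessel/order-derivative product divided by $t$. Collecting the four resulting integrands and applying the recurrence $2\nu J_\nu(t)=t(J_{\nu-1}(t)+J_{\nu+1}(t))$ under the integral reveals a single function $f_\mu(t)$ of order $\mu$ (the natural combination of $J_\mu(t)J_0(t)$ with its order-derivative partners that emerges from $\partial_\nu$). Integration by parts on $\int_x^\infty f_\mu(t)/t\,dt$ and $\int_x^\infty f_\mu(t)/t^2\,dt$, combined with the Bessel equation applied to the constituents of $f_\mu$, collapses the expression into the claimed combination $-f_\mu(x)+\mu^2\int_x^\infty f_\mu(t)/t\,dt+(1-\mu^2)x\int_x^\infty f_\mu(t)/t^2\,dt$. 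The homogeneous solution proportional to $x$ is not pinned down by these local manipulations and is recorded as $Cx$, to be determined in the next section by asymptotic matching.

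\textbf{Main obstacle.} The consolidation in Step~2 is the crux: the four tail integrals produced by Lemma~1, one for each of the four shifts $\pm1,\pm\mu$ generated by the $n$-recurrence, do not manifestly reassemble into integrals of a single function, and the specific coefficients $\mu^2$ and $1-\mu^2$ only emerge after integration by parts combined with the Bessel equation. Keeping precise track of the boundary contributions at $t=x$ (which feed the $-f_\mu(x)$ piece) and at $t=\infty$ (which must vanish by the small-argument Bessel asymptotics invoked in the proof of Lemma~1) is the core of the calculation.
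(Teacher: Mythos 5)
Your plan diverges from the paper's route at the very first step and, more importantly, stops short of the step that actually produces the claimed formula. The paper does not absorb the weight $n$ with the three-term recurrence; it differentiates the Lemma 1 identity with respect to the common order $\nu$ and then \emph{sums over the base order} $\nu=0,1,2,\dots$, so that the double sum $\sum_{\nu}\sum_{n}$ of quantities depending only on $\nu+n$ collapses to $\sum_{n}n(\cdots)$. The payoff of that choice is structural: the resulting right-hand side contains $\sum_{\nu}\nu\bigl(\hat J_{\mu+\nu}J_{\nu}+J_{\mu+\nu}\hat J_{\nu}\bigr)$ \emph{under the tail integral}, which after the $\mu\to-\mu$ symmetrization is $P_{\mu}(t)$ itself. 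One therefore lands on the self-referential equation $P_{\mu}(x)+\int_x^{\infty}P_{\mu}(t)\,dt/t=(\text{known in terms of }f_{\mu})$, which is converted to the first-order ODE $h_{\mu}'-h_{\mu}/x=(\mu^2-1)f_{\mu}'$ and solved by variation of parameters. The coefficients $\mu^2$ and $1-\mu^2$ and, crucially, the homogeneous mode $Cx$ all come from solving that equation. In your route there is no integral equation: every step you describe (recurrence, Lemma 1, integration by parts) is an exact identity, so either the chain fully determines $P_{\mu}$ with no free constant — in which case writing "$+Cx$" is unjustified and you would in fact owe us the value of $C$ directly — or the chain does not close. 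You cannot have an undetermined homogeneous solution without a differential or integral equation for it to be homogeneous \emph{to}.

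The consolidation you defer to "integration by parts combined with the Bessel equation" is also not a routine verification, and I do not believe it goes through as stated. After applying the recurrence, your four unweighted sums are (up to $\varepsilon_n$ bookkeeping and boundary terms at $n=0,1$) order-derivatives of Lemma 1 sums with \emph{different base orders and different offsets}: reindexing $\sum_{n\ge1}\bigl(\hat J_{n-\mu}J_{n-1}+J_{n-\mu}\hat J_{n-1}\bigr)$ gives base $0$, offset $1-\mu$, hence a tail integrand built from $J_{1-\mu}(t)J_{0}(t)$ and its hatted partners, while the $J_{n+1}$ shift gives base $2$, offset $-1-\mu$, hence integrands built from $J_{1-\mu}(t)J_{2}(t)$. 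These are products of Bessel functions of genuinely different orders; they do not visibly reassemble into multiples of the single function $f_{\mu}(t)=\tfrac{\pi}{4}\bigl(Y_{\mu}(t)J_{0}(t)+J_{\mu}(t)Y_{0}(t)\bigr)$, and you give no mechanism for the identity $\hat J_{-\mu}+(-1)^{\mu}\hat J_{\mu}=(-1)^{\mu}\pi Y_{\mu}$ to act on them, since the $\pm\mu$ order-derivative pairs no longer sit at matching orders after the $\pm1$ shifts. Until that reassembly is exhibited explicitly — with the boundary terms at $t=x$ producing exactly $-f_{\mu}(x)$ and the coefficients $\mu^2$ and $1-\mu^2$ emerging — the proposal is a plausible program rather than a proof, and its central step is precisely the one you label as the main obstacle.
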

\begin{proof}
Let us start with \eqref{lammam1}. By taking the order derivative on both sides, we have
\begin{align}
    &2\sum_{n=1}^{\infty}\left(\hat{J}_{\mu+\nu+n}(x)J_{n+\nu}(x)\!+\!J_{\mu+\nu+n}(x)\hat{J}_{n+\nu}(x)\right)+\left(\hat{J}_{\mu+\nu}(x)J_{\nu}(x)\!+\!J_{\mu+\nu}(x)\hat{J}_{\nu}(x)\right)\nonumber\\
    &=-\int_x^{\infty}\left(2J_{\nu+\mu}(t)J_{\nu}(t)\!+\!(2\nu+\mu)\left(\hat{J}_{\mu+\nu}(t)J_{\nu}(t)\!+\!J_{\mu+\nu}(x)\hat{J}_{\nu}(t)\right)\right)\frac{dt}{t}.
\end{align}
Then, after simple manipulation and summing over $\nu=0,1,2,...$, we abtain
\begin{align}\label{prop1}
    &\sum_{\nu=0}^{\infty}\sum_{n=1}^{\infty}\left(\hat{J}_{\mu+\nu+n}(x)J_{n+\nu}(x)\!+\!J_{\mu+\nu+n}(x)\hat{J}_{n+\nu}(x)\right)\nonumber\\
    &=\sum_{n=1}^{\infty}n\left(\hat{J}_{\mu+n}(x)J_{n}(x)\!+\!J_{\mu+n}(x)\hat{J}_{n}(x)\right)\nonumber\\
    &=-\int_x^{\infty}\left(\sum_{\nu=0}^{\infty}J_{\nu+\mu}(t)J_{\nu}(t)\!+\!\sum_{\nu=0}^{\infty}(\nu+\frac{\mu}{2})\left(\hat{J}_{\mu+\nu}(t)J_{\nu}(t)\!+\!J_{\mu+\nu}(x)\hat{J}_{\nu}(t)\right)\right)\frac{dt}{t}\nonumber\\
    &\ \ \ \ -\frac{1}{2}\sum_{\nu=0}^{\infty}\left(\hat{J}_{\mu+\nu}(x)J_{\nu}(x)\!+\!J_{\mu+\nu}(x)\hat{J}_{\nu}(x)\right).
\end{align}
Now we can construct $P_{\mu}$ from the LHS of \eqref{prop1}. Letting $\mu\to-\mu$ in \eqref{prop1} and then adding $(-1)^{\mu}\times$\eqref{prop1} with the original $\mu$ yields
\begin{align}\label{prop2}
    &P_{\mu}(x)\equiv\sum_{n=1}^{\infty}\left\{n\left[\left(\hat{J}_{n-\mu}(x)J_{n}(x)+J_{n-\mu}(x)\hat{J}_{n}(x)\right)+(-1)^{\mu}\left(\hat{J}_{n+\mu}(x)J_{n}(x)+J_{n+\mu}(x)\hat{J}_{n}(x)\right)\right]\right\}\nonumber\\
    &=-\int_x^{\infty}\left(\sum_{\nu=0}^{\infty}\left(J_{\nu-\mu}(t)J_{\nu}(t)+(-1)^{\mu}J_{\nu+\mu}(t)J_{\nu}(t)\right)\right)\frac{dt}{t}\nonumber\\
    &\ \ \ \ -\int_x^{\infty}\left(\sum_{\nu=0}^{\infty}\nu\left(\hat{J}_{-\mu+\nu}(t)J_{\nu}(t)\!+\!J_{-\mu+\nu}(x)\hat{J}_{\nu}(t)\right)+(-1)^{\mu}\left(\hat{J}_{\mu+\nu}(t)J_{\nu}(t)\!+\!J_{\mu+\nu}(x)\hat{J}_{\nu}(t)\right)\right)\frac{dt}{t}\nonumber\\
    &\ \ \ \ -\int_x^{\infty}\left(\sum_{\nu=0}^{\infty}\frac{\mu}{2}\left(-\left(\hat{J}_{-\mu+\nu}(t)J_{\nu}(t)\!+\!J_{-\mu+\nu}(x)\hat{J}_{\nu}(t)\right)+(-1)^{\mu}\left(\hat{J}_{\mu+\nu}(t)J_{\nu}(t)\!+\!J_{\mu+\nu}(x)\hat{J}_{\nu}(t)\right)\right)\right)\frac{dt}{t}\nonumber\\
    &\ \ \ \ -\frac{1}{2}\sum_{\nu=0}^{\infty}\left(\left(\hat{J}_{-\mu+\nu}(x)J_{\nu}(x)\!+\!J_{-\mu+\nu}(x)\hat{J}_{\nu}(x)\right)+(-1)^{\mu}\left(\hat{J}_{\mu+\nu}(x)J_{\nu}(x)\!+\!J_{\mu+\nu}(x)\hat{J}_{\nu}(x)\right)\right).
\end{align}
We can deal with the RHS of \eqref{prop2} term by term. For the first term, we apply \eqref{lammam2}, which gives
\begin{align}
    &-\int_x^{\infty}\left(\sum_{\nu=0}^{\infty}\left(J_{\nu-\mu}(t)J_{\nu}(t)+(-1)^{\mu}J_{\nu+\mu}(t)J_{\nu}(t)\right)\right)\frac{dt}{t}\nonumber\\
    =&-\int_x^{\infty}\left(\sum_{\nu=1}^{\infty}\left(J_{\nu-\mu}(t)J_{\nu}(t)+(-1)^{\mu}J_{\nu+\mu}(t)J_{\nu}(t)\right)+J_{-\mu}(t)J_{0}(t)+(-1)^{\mu}J_{\mu}(t)J_{0}(t)\right)\frac{dt}{t}\nonumber\\
    =&-(-1)^{\mu}\int_x^{\infty} J_{\mu}(t)J_{0}(t)\frac{dt}{t}.
\end{align}
The second term is rather simple:
\begin{align}
     -&\int_x^{\infty}\left(\sum_{\nu=0}^{\infty}\nu\left(\hat{J}_{-\mu+\nu}(t)J_{\nu}(t)\!+\!J_{-\mu+\nu}(x)\hat{J}_{\nu}(t)\right)+(-1)^{\mu}\left(\hat{J}_{\mu+\nu}(t)J_{\nu}(t)\!+\!J_{\mu+\nu}(x)\hat{J}_{\nu}(t)\right)\right)\frac{dt}{t}\nonumber\\
     &=-\int_x^{\infty}\frac{P_{\mu}(t)}{t}dt.
\end{align}
For the integrand of the third term, using \eqref{lammam1}, we have
\begin{align}
    &\sum_{\nu=0}^{\infty}\left(-\left(\hat{J}_{-\mu+\nu}(t)J_{\nu}(t)\!+\!J_{-\mu+\nu}(x)\hat{J}_{\nu}(t)\right)+(-1)^{\mu}\left(\hat{J}_{\mu+\nu}(t)J_{\nu}(t)\!+\!J_{\mu+\nu}(x)\hat{J}_{\nu}(t)\right)\right)\nonumber\\
    =&\int_x^{\infty}\left(J_{-\mu}(t)J_{0}(t)\!-\!\frac{\mu}{2}\left(\hat{J}_{-\mu}(t)J_{0}(t)\!+\!J_{-\mu}(x)\hat{J}_{0}(t)\right)\right)\frac{dt}{t}-\frac{1}{2}\left(\hat{J}_{-\mu}(x)J_{0}(x)\!+\!J_{-\mu}(x)\hat{J}_{0}(x)\right)\nonumber\\
    &\ +(-1)^{\mu}\left(-\int_x^{\infty}\left(J_{\mu}(t)J_{0}(t)\!+\!\frac{\mu}{2}\left(\hat{J}_{\mu}(t)J_{0}(t)\!+\!J_{\mu}(x)\hat{J}_{0}(t)\right)\right)\frac{dt}{t}+\frac{1}{2}\left(\hat{J}_{\mu}(x)J_{0}(x)\!+\!J_{\mu}(x)\hat{J}_{0}(x)\right)\right).
\end{align}
From the representations of $\hat{J}_n(x)$ with integer $n$ (see \cite{GR}, 8.486(1)), we can obtain such a relation
\begin{align}\label{prop3}
    \hat{J}_{-\mu}(x)+(-1)^{\mu}\hat{J}_{\mu}(x)=(-1)^{\mu}\pi Y_{\mu}(x).
\end{align}
Therefore, by applying \eqref{prop3}, $\hat{J}_0(x)=\pi Y_0(x)/2$ and $J_{-n}(x)=(-1)^nJ_{n}(x)$ for integer $n$, we have
\begin{align}
    &\sum_{\nu=0}^{\infty}\left(\left(-\hat{J}_{-\mu+\nu}(t)J_{\nu}(t)\!+\!J_{-\mu+\nu}(x)\hat{J}_{\nu}(t)\right)+(-1)^{\mu}\left(\hat{J}_{\mu+\nu}(t)J_{\nu}(t)\!+\!J_{\mu+\nu}(x)\hat{J}_{\nu}(t)\right)\right)\nonumber\\
    =&-(-1)^{\mu}\frac{\pi\mu}{2}\int_x^{\infty}\left( Y_{\mu}(t)J_{0}(t)\!+\!J_{\mu}(t)Y_{0}(t)\right)\frac{dt}{t}-\frac{1}{2}\left(\hat{J}_{-\mu}(x)-(-1)^{\mu}\hat{J}_{\mu}(x)\right)J_{0}(x).
\end{align}
For the last term, we can follow the same idea in handling the third term. Then, the last term turns
\begin{align}
    &\sum_{\nu=0}^{\infty}\left(\left(\hat{J}_{-\mu+\nu}(t)J_{\nu}(t)\!+\!J_{-\mu+\nu}(x)\hat{J}_{\nu}(t)\right)+(-1)^{\mu}\left(\hat{J}_{\mu+\nu}(t)J_{\nu}(t)\!+\!J_{\mu+\nu}(x)\hat{J}_{\nu}(t)\right)\right)\nonumber\\
    =&-\int_x^{\infty}\left(J_{-\mu}(t)J_{0}(t)\!-\!\frac{\mu}{2}\left(\hat{J}_{-\mu}(t)J_{0}(t)\!+\!J_{-\mu}(x)\hat{J}_{0}(t)\right)\right)\frac{dt}{t}+\frac{1}{2}\left(\hat{J}_{-\mu}(x)J_{0}(x)\!+\!J_{-\mu}(x)\hat{J}_{0}(x)\right)\nonumber\\
    &\ +(-1)^{\mu}\left(-\int_x^{\infty}\left(J_{\mu}(t)J_{0}(t)\!+\!\frac{\mu}{2}\left(\hat{J}_{\mu}(t)J_{0}(t)\!+\!J_{\mu}(x)\hat{J}_{0}(t)\right)\right)\frac{dt}{t}+\frac{1}{2}\left(\hat{J}_{\mu}(t)J_{0}(x)\!+\!J_{\mu}(x)\hat{J}_{0}(x)\right)\right)\nonumber\\
    =&-\int_x^{\infty}\left((-1)^{\mu}2J_{\mu}(t)J_0(t)\!-\!\frac{\mu}{2}\left(\hat{J}_{-\mu}(t)\!-\!(-1)^{\mu}\hat{J}_{\mu}(t)\right)J_0(t)\right)\frac{dt}{t}\!+\!\frac{(-1)^{\mu}\pi}{2}\left( Y_{\mu}(x)J_0(x)\!+\!J_{\mu}(x)Y_{0}(x)\right).
\end{align}
Finally, after reassembling all the terms, we obtain the solvable expression
\begin{align}
    &(-1)^{\mu}\left(P(x)+\int_x^{\infty} \frac{P(t)}{t}dt\right)\nonumber\\
    =&\frac{\pi\mu^2}{4}\int_x^{\infty}\frac{dt}{t}\int_t^{\infty}\frac{du}{u}\left( Y_{\mu}(u)J_{0}(u)\!+\!J_{\mu}(u)Y_{0}(u)\right)-\frac{\pi}{4}\left( Y_{\mu}(x)J_0(x)+J_{\mu}(x)Y_{0}(x)\right).
\end{align}
Setting
\begin{align}
    f_{\mu}(x)=\frac{\pi}{4}\left( Y_{\mu}(x)J_0(x)+J_{\mu}(x)Y_{0}(x)\right),\ P_{\mu}(x)=(-1)^{\mu}\mu^2\left(-f_{\mu}(x)+\int_x^{\infty}\frac{f_{\mu}(t)}{t}dt\right)+(-1)^{\mu}h_{\mu}(x),
\end{align}
we obtain
\begin{align}
    &h_{\mu}(x)+\int_x^{\infty}\frac{h(t)}{t}dt=(\mu^2-1)f_{\mu}(x)\nonumber\\
    \Rightarrow&h'_{\mu}(x)-\frac{h_{\mu}(x)}{x}=(\mu^2-1)f'_{\mu}(x)
\end{align}
It is easy to solve it via variation of parameters:
\begin{align}
    h_{\mu}(x)=x\left((1-\mu^2)\int_x^{\infty}\frac{f'_{\mu}(t)}{t}dt+C\right),
\end{align}
where $C$ is an undetermined integral constant. Then we have
\begin{align}
    P_{\mu}(x)=(-1)^{\mu}\left(\mu^2\left(-f_{\mu}(x)+\int_x^{\infty}\frac{f_{\mu}(t)}{t}dt\right)+x\left((1-\mu^2)\int_x^{\infty}\frac{f'_{\mu}(t)}{t}dt+C\right)\right)
\end{align}
Because $f_{\mu}(t)/t\to0$ when $t\to\infty$, the equation above can be further simplified by integrating by parts
\begin{align}
    P_{\mu}(x)=(-1)^{\mu}\left(-f_{\mu}(x)+\mu^2\int_x^{\infty}\frac{f_{\mu}(t)}{t}dt+(1-\mu^2)x\int_x^{\infty}\frac{f_{\mu}(t)}{t^2}dt+Cx\right).
\end{align}

\end{proof}
\section{Determination of $C$ and Asymptotic Approximation of $P_{\mu}(x)$}
To determine the integral constant in \eqref{propm}, we compare the RHS in \eqref{propm} and the original summation in \eqref{sum} as $x\to\infty$.

To obtain the asymptotic approximation of the summation in \eqref{sum}, we use the asymptotic form of $\hat{J}_{\nu}(x)$ \cite{Order} and $J_{\nu}(x)$ for large argument $x\gg\nu^2$:
\begin{align}
    \hat{J}_{\nu}(x)\sim\sqrt{\frac{\pi}{2x}}\sin\left(x-\frac{\nu\pi}{2}-\frac{\pi}{4}\right),\nonumber\\
    J_{\nu}(x)\sim\sqrt{\frac{2}{\pi x}}\cos\left(x-\frac{\nu\pi}{2}-\frac{\pi}{4}\right).\nonumber
\end{align}
Hence, the summation becomes
\begin{align}\label{trig}
    P_{\mu}(x)&\sim-\frac{1}{x}\sum_{n=1}^{N}n\left[\cos\left(2x-n\pi+\mu\frac{\pi}{2}\right)+(-1)^{\mu}\cos\left(2x-n\pi-\mu\frac{\pi}{2}\right)\right]\nonumber\\
    &\sim-N\cos\left(\frac{\mu\pi}{2}\right)\frac{\cos(2x)}{x},
\end{align}
where $x\gg N$ and $N\to\infty$. Then we have
\begin{align}\label{abs}
    \abs{P_{\mu}(x)}<\cos\left(\frac{\mu\pi}{2}\right)\frac{1}{N}\to0.
\end{align}
Now we consider the expression in \eqref{propm}. Because $f_{\mu}(x)\sim1/x$ when $x\to\infty$. Thus the first three terms are at order of $O(x^{-1})$ and $O(x^{-2})$, which means $\abs{P_{\mu}(x)}\sim a/x+Cx$ where $a$ is also a constant like $C$. Comparing this result to \eqref{abs}, we instantly get $C=0$. 

The integrals in \eqref{propm} can be evaluated as well. Following the method by using Mellin transform in \cite{chen} and finding useful Mellin transform formulae in \cite{tables}, the closed-form expression can be represented in terms of Meijer G functions:
\begin{align}
    P_{\mu}(x)=\frac{1}{4}  (-1)^{\mu +1} &\left( \pi  \left(Y_0(x) J_{\mu }(x)+J_0(x) Y_{\mu }(x)\right)+\sqrt{\pi }\mu ^2 G_{2,4}^{3,0}\left(x^2\left|
\begin{array}{c}
 \frac{1}{2},1 \\
 -\frac{\mu }{2},\frac{\mu }{2},\frac{\mu }{2},-\frac{\mu }{2} \\
\end{array}
\right.\right)\right.\nonumber\\
&\ \ +\left.\sqrt{\pi }\left(1-\mu ^2 \right)x G_{2,4}^{3,0}\left(x^2\left|
\begin{array}{c}
 -\frac{1}{2},1 \\
 -\frac{\mu +1}{2} ,\frac{\mu -1}{2},\frac{\mu -1}{2},-\frac{\mu +1}{2} \\
\end{array}
\right.\right)\right).
\end{align}

After we totally determine the expression of $P_{\mu}(x)$, we can analyze its asymptotic behavior at large $x$. Firstly, the asymptotic form of $f_{\mu}(x)$ shows
\begin{align}
    f_{\mu}(x)\sim-\frac{1}{2 x}\cos\left(2x-\frac{\mu\pi}{2}\right).
\end{align}
Then the asymptotic expression of integrals in \eqref{prop1} can be found by integrating by part, e.g.
\begin{align}
    \int_x^{\infty}\frac{f_{\mu}(t)}{t}dt&\sim-\int_x^{\infty}\cos\left(2x-\frac{\mu\pi}{2}\right)\frac{dt}{t^2}=\frac{1}{2x^2}\sin\left(2x-\frac{\mu\pi}{2}\right)-\int_x^{\infty}\sin\left(2x-\frac{\mu\pi}{2}\right)\frac{dt}{t^3}\nonumber\\
    &\sim O(x^{-2}).\nonumber
\end{align}
Both of integrals can be neglected due to $\sim O(x^{-2})$ at large $x$. Therefore, we have
\begin{align}
    P_{\mu}(x)\sim\frac{(-1)^{\mu}}{2x}\cos\left(2x-\frac{\mu\pi}{2}\right).
\end{align}
Comparing to the asymptotic form $Q(x)$ in \cite{chen}, we have $P_0(x)\sim -\frac{1}{2}Q(x)$ at large $x$. However, $P_0(x)\neq -\frac{1}{2}Q(x)$, which means that the result in \cite{chen} cannot be considered as a special case of $P_{\mu}(x)$ when $\mu=0$. It is not hard to find this argument because $P_{0}(x)$ contains the Meijer G function $G_{2,4}^{3,0}$ but $Q(x)$ contains $G^{2,0}_{1,3}$. The reason is from \eqref{lammam2} in Lemma II.2. In this paper, we only consider the cases when $\mu\neq0$, such that any time \eqref{lammam2} is applied, the RHS of this identity is automatically zero. However, if $\mu=0$ in the derivation of \eqref{propm} in Proposition II.3, $\delta_{00}=1$ can make a significant difference of the result.

\bibliography{mybib}
\bibliographystyle{unsrt}

\end{document}